\newcommand{\bea}{\begin{eqnarray}}
\newcommand{\eea}{\end{eqnarray}}
\newcommand{\ba}{\begin{array}}
\newcommand{\ea}{\end{array}}
\newcommand{\bc}{\begin{center}}
\newcommand{\ec}{\end{center}}
\newcommand{\ol}{\overline}
\newcommand{\be}{\begin{equation}}
\newcommand{\ee}{\end{equation}}
\newcommand{\dsf}{\displaystyle\frac}
\def\g{\gamma}
\def\G{\Gamma}
\def\l{\lambda}
\def\g{\gamma}
\def\r{\rho}
\def\t{\theta}
\def\Q{\mathbb{Q}}
\def\Z{\mathbb{Z}}
\def\N{\mathbb{N}}
\def\C{\mathbb{C}}
\newtheorem{thm}{╥хюЁхьр}[section]
\newtheorem{lem}[thm]{╦хььр}
\begin{document}
%\Large
%\Large

\title[]{╬┴ ╬─═╬╔ ╧╬╦╚═╬╠╚└╦═╬╔ $P$-└─╚╫┼╤╩╬╔ ─╚═└╠╚╫┼╤╩╬╔ ╤╚╤╥┼╠┼}

\author{╘.╠.╠єїрьхфют}
\address{╘.╠.╠єїрьхфют\\
╠хїрэшъю-ьрЄхьрЄшўхёъшщ Їръєы№ЄхЄ\\
═рЎшюэры№э√щ ╙эштхЁёшЄхЄ ╙чсхъёЄрэр\\
┬єчуюЁюфюъ, ╥р°ъхэЄ, 100174\\
╙чсхъшёЄрэ} \email{{\tt far75m@yandex.ru}, {\tt
farrukh\_m@iiu.edu.my}}

\author{╙.└.╨ючшъют}
\address{╙.└.╨ючшъют\\
╚эёЄшЄєЄр ьрЄхьрЄшъш ш шэЇюЁьрЎшюээ√ї Єхїэюыюушщ └═╨╙ч\\
єы. ╘.╒юфцрхтр,29, ╥р°ъхэЄ, 100125\\
╙чсхъшёЄрэ}\email{{\tt rozikovu@yandex.ru}}
\begin{abstract}
┬  ЁрсюЄх фы   $p$-рфшўхёъющ фшэрьшўхёъющ ёшёЄхь√
$f(x)=x^{2n+1}+ax^{n+1}$ эр ьэюцхёЄтх ъюьяыхъёэ√ї $p$-рфшўхёъшї
ўшыхё $\C_p$, яюыэюёЄ№■ юяшёрэ√ фшёъш ╟шухы  ш рЄЄЁръЄюЁ√ Єръшї
ёшёЄхь.
\end{abstract}

\maketitle

\footnotetext[1]{═рёЄю ∙шщ рфЁхё (╘.╠.): Department of Comput. \&
Theor. Sci., Faculty of Sciences, IIUM, P.O. Box, 141, 25710,
Kuantan, Pahang, Malaysia }

\section{┬тхфхэшх}

$p$-рфшўхёъшх ўшёыр тяхЁт√х с√ыш ттхфхэ√ эхьхЎъшь ьрЄхьрЄшъюь
╩.├хэчхыхь. ╧юёых юЄъЁ√Єш  $p$-рфшўхёъшї ўшёхы, юэш
ЁрёёьрЄЁштрышё№ ъръ ўшёЄю ьрЄхьрЄшўхёъшщ юс·хъЄ шёёых\-фютрэш .
═рўшэр  ё 1980-ї уюфют Ёрчышўэ√х ьюфхыш, юяшёрээ√х эр  ч√ъх
$p$-рфшўхёъюую рэрышчр, ръЄштэю шчєўр■Єё . ╨рчышўэ√х яЁшьхэхэш 
Єръшї ўшёхы ъ ЄхюЁхЄшўхёъющ Їшчшъх  с√ыш яЁхфыюцхэ√ т ЁрсюЄрї
\cite{1}-\cite{5}, ъ ътрэЄютющ ьхїрэшъх - т \cite{6},
 ьэюушь фЁєушь юсырёЄ ь Їшчшъш - т \cite{7},\cite{8}.

╚ёёыхфютрэш  т $p$-рфшўхёъющ ътрэЄютющ Їшчшъх ёЄшьєышЁютрыш
шчєўхэшх $p$-рфшўхёъшї фш\-эр\-ьш\-ўхёъшї ёшёЄхь (ёь., эряЁшьхЁ,
\cite{9}-\cite{12}). ═хъюЄюЁ√х °руш т ¤Єюь эряЁртыхэшш \cite{9}
яюърч√тр■Є, ўЄю фрцх яЁюёЄ√х (ьюэюьшры№э√х) фшёъЁхЄэ√х
фшэрьшўхёъшх ёшёЄхь√ $f(x)=x^n$ эрф яюы ьш $p$-рфшўхёъшї ўшёхы
$\Q_p$ ш $\C_p$ шьх■Є тяюыэх ъюьяыхъёэюх яютхфхэшх. ╥ръюх
яютхфхэшх ёє∙хёЄтхээю чртшёшЄ юЄ чэрўхэш  яЁюёЄюую ўшёыр $p$. ╤
шчьхэхэшхь $p$ рЄЄЁръЄюЁ√ юЄюсЁрцр■Єё  т фшёъш ╟шухы  ш юсЁрЄэю.
╫шёыю Ўшъыют ш шї фышэ√ Єръцх чртшё Є юЄ $p$ (ёь.\cite{13}). ┬ ёт чш ё ¤Єшь тючэшърхЄ
чрфрўр шчєўхэш  тючьє∙хээ√ї тшэрьшўхёъшї ёшёЄхь тшфр
$f_q(x) = x^n+q(x)$, уфх тючьє∙хэшх чрфрхЄё  эхъюЄюЁ√ь ьэюуюўыхэшь $q(x)$. ┬ ЁрсюЄрї
\cite{KhN},\cite{KhS} шёёыхфютрэ√ ёт чш Єръшї фшэрьшўхёъшї ёшёЄхь ё ьюэюьшры№э√ьш ёшёЄхьрьш. └ Єръцх
шчєўхэ√ яхЁшюфшўхёъшх Єюўъш тючьє∙хээшї ёшёЄхь. ▌Єш шёёыхфютрэш  яюърч√тр■Є, ўЄю
яютхфхэшх тючьє∙хээ√ї ёшёЄхь юЄышўэ√ юЄ юс√ўэюую (ьюэюьшры№эюую) ш яю¤Єюьє шчєўхэшх
Єръшї ёшёЄхь трцэю (ёь. Єръцх \cite{A,12}).┴юыхх юс∙шх шёёыхфютрэш  яюышэюьшры№э√ї фшэрьшўхёъшї ёшёЄхь
яЁютюфшышё№ т  \cite{23}-\cite{25}, уфх шчєўхэ√ ьэюцхёЄтр ─цєыш  (Julia) ш ╘рЄє
Єръшї ёшёЄхь. ▌Єш шёёыхфютрэш  ёЄшьєышЁє■Є шчєўхэш  ъюьяюэхэЄ
ьэюцхёЄтр ╘рЄє фы  яюышэюьшры№э√ї фшэрьшўхёъшї ёшёЄхь, ъюЄюЁюх
ёюфхЁцшЄ рЄЄЁръЄюЁ√ ш фшёъш ╟шухы  (ёь.
\cite{24},\cite{25},\cite{27}). ╧ю¤Єюьє, т \cite{MM} с√ыю
шчєўхэю рЄЄЁръЄюЁ√ ш фшёъш ╟шухы  сюыхх яЁюёЄющ фшэрьшўхёъющ ёшёЄхь√ тшфр
$f(x)=x^3+ax^2$ эрф $\Q_p$ яЁш тёхї тючьюцэ√ї чэрўхэш ї ярЁрьхЄЁр $a$.
┬ эрёЄю ∙хщ ЁрсюЄх сєфхЄ ЁрёёьрЄЁштрЄ№ё  юсюс°хэшх т√°х ёърчрээющ ёшёЄхь√с Є.х.
$g(x)=x^{2n+1}+ax^{n+1}$ эрф яюыхь $\C_p$. ╚чєўр■Єё  фшёъш
╟шухы  ш рЄЄЁръЄюЁ√ Єръющ ёшёЄхь√. ╟рьхЄшь, ўЄю т√сюЁ тшфр
юсєёыютыхэ Єхь, ўЄю эхяюфтшцэ√х Єюўъш ЇєэъЎшш $g(x)$ эрїюф Єё  т
 тэюь тшфх. ╟рьхЄшь, ўЄю  эр°х шёёыхфютрэшх ёє∙хёЄтхээю юёэютрэю
эр $p$-рфшўхёъюь рэрышчх.\\[2mm]

\section{╧ЁхфтрЁшЄхы№э√х ётхфхэш }

\subsection{$p$-рфшўхёъшх ўшёыр}

╧єёЄ№ $\Q_p$ яюых $p$-рфшўхёъшї ўшёхы, ъюЄюЁюх  ты хЄё 
яюяюыэхэшхь яюы  ЁрЎшюэры№э√ї ўшёхы $\Q$ яю юЄэю°хэш■
$p$-рфшўхёъющ эюЁь√, юяЁхфхыхээ√щ эр $\Q$, чфхё№ ш фрыхх $p$ -
ЇшъёшЁютрээюх яЁюёЄюх ўшёыю. ▌Єр эюЁьр юяЁхфхы хЄё  ёыхфє■∙шь
юсЁрчюь. ╩рцфюх ЁрЎшюэры№эюх ўшёыю $x\neq 0$ ьюцэю чряшёрЄ№ т тшфх
$x=p^r\dsf{n}{m}$, уфх $n$  ш $m$ эх фхы Єё  эр  $p$. ╥юуфр
$p$-рфшўхёър  эюЁьр $x$ Ёртэр $|x|_p=p^{-r}$.

▌Єр эюЁьр єфютыхЄтюЁ хЄ ёшы№эюьє эхЁртхэёЄтє ЄЁхєуюы№эшър:
$$
|x+y|_p\leq\max\{|x|_p,|y|_p\}.
$$
▌Єю ётющёЄтю яюърч√трхЄ  эхрЁїшьхфютюёЄ№ эюЁь√.

╚ч ¤Єюую ётющёЄтр эхяюёЁхфёЄтхээю ёыхфє■Є ёыхфє■∙шх:

1) хёыш  $|x|_p\neq |y|_p$, Єю $|x-y|_p=\max\{|x|_p,|y|_p\}$;

2) хёыш  $|x|_p=|y|_p$, Єю  $|x-y|_p\leq |x|_p$;

╧юых $\Q_p$ эх  ты хЄё  рыухсЁршўхёъш яюыэ√ь, яю¤Єюьє ўхЁхч
$\Q_p^a$ юсючэрўрхЄё  хх рыухсЁршўхёъюх чрь√ърэшх. ┬ ёшыє ЄхюЁхь√
╩Ёєыыр (ёь.[28, ЄхюЁхьр 14.1, 14.2]) эюЁьр чрфрээр  т $\Q_p$ шьххЄ
хфшэёЄтхээюх яЁюфюыцхэшх фю $\Q_p^a$, ъюЄюЁюх Єюцх  ты хЄё 
эхрЁїшьхфют√ь.  ╟рьхЄшь, ўЄю $\Q_p^a$ эх  ты хЄё  яюыэ√ь
юЄэюёшЄхы№эю ¤Єющ эюЁь√. ╧юяюыэхэшх $\Q_p^a$ Єръцх  ты хЄё 
рыухсЁршўхёъш яюыэ√ь (ёь. [28, ЄхюЁхьр 17.1]), ш юэю юсючэрўрхЄё 
ўхЁхч $\C_p$ ш эрч√трхЄё  {\it яюыхь ъюьяыхъёэ√ї $p$-рфшўхёъшї
ўшёхы}.

─ы  ы■сюую $a\in\C_p$ ш $r>0$ юсючэрўшь
$$
 U_r(a)=\{x\in\C_p :
|x-a|_p< r\}, \ \ \ S_r(a)=\{x\in\C_p : |x-a|_p= r\}.
$$

─юърчрЄхы№ёЄтю ёыхфє■∙шї ыхьь ьюцэю эрщЄш т
\cite{9},\cite{28}.\\[1.1mm]

\begin{lem}\label{2.1} ┼ёыш $a\in S_1(0)$, Єю $S_1(0)\setminus
U_1(a)\subset S_1(a)$.
\end{lem}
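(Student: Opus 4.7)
The plan is a direct application of the ultrametric inequality, using nothing beyond the definitions of $S_1(0)$ and $U_1(a)$. Let $x \in S_1(0) \setminus U_1(a)$; unpacking the definitions, this means $|x|_p = 1$ and $|x-a|_p \geq 1$ (the latter because $U_1(a)$ is the \emph{open} ball of radius $1$). The goal is to upgrade the second inequality to an equality $|x-a|_p = 1$, which is exactly the statement $x \in S_1(a)$.

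For the upper bound I would invoke the strong triangle inequality applied to $x - a$: since $|x|_p = 1$ by assumption and $|a|_p = 1$ because $a \in S_1(0)$, we get
$$
|x - a|_p \leq \max\{|x|_p,\,|a|_p\} = 1.
$$
Combining this with $|x - a|_p \geq 1$ forces $|x-a|_p = 1$, which is the desired conclusion.

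There is no real obstacle here; the only subtlety to flag is the (purely notational) one of keeping track of which inequality is strict. The set $S_1(0) \setminus U_1(a)$ is characterized by a non-strict condition $|x-a|_p \geq 1$, and the strong triangle inequality pins this down to exactly $1$. No case analysis or appeal to the properties of $\C_p$ beyond the ultrametric axiom is needed, so the whole proof should fit in two or three lines.
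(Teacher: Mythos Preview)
Your argument is correct. The paper actually states Lemma~\ref{2.1} without proof (it is listed among the preliminary facts), so there is no proof in the paper to compare against; your two-line derivation via the ultrametric inequality is exactly the intended justification and in fact uses precisely property~2) recorded just above the lemma, namely that $|x|_p=|a|_p$ implies $|x-a|_p\le |x|_p$.
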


\begin{lem}\label{2.2} ╧єёЄ№ $C^k_n=n!/(k!(n-k)!), k\leq n$. ╥юуфр
$|C^k_n|_p\leq 1.$
\end{lem}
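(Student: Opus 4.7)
The plan is to exploit the fact that $C^k_n$ is always a non-negative integer, since it counts the number of $k$-element subsets of an $n$-element set. Given this, the statement reduces to the elementary observation that every integer has $p$-adic absolute value at most $1$: if $m\in\Z$ is nonzero, writing $m=p^r\cdot n/m'$ with $n,m'$ coprime to $p$ forces $r\ge 0$ (otherwise $m$ would not lie in $\Z$), so $|m|_p=p^{-r}\le 1$. For $m=0$ the claim $|m|_p=0\le 1$ is trivial.

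If a more intrinsic argument is preferred, one can work directly with the $p$-adic valuation $v_p$ via Legendre's formula
\[
v_p(n!)=\sum_{i\ge 1}\left\lfloor\frac{n}{p^i}\right\rfloor.
\]
Applying the elementary inequality $\lfloor a+b\rfloor\ge\lfloor a\rfloor+\lfloor b\rfloor$ with $a=k/p^i$ and $b=(n-k)/p^i$, summing over $i\ge 1$, yields
\[
v_p(n!)\ge v_p(k!)+v_p((n-k)!),
\]
so $v_p(C^k_n)=v_p(n!)-v_p(k!)-v_p((n-k)!)\ge 0$, which is exactly $|C^k_n|_p\le 1$.

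There is no real obstacle here: the statement is a standard fact about binomial coefficients and either route is only a few lines. I would present the first, since it is shorter and only uses the definition of $|\cdot|_p$ recalled in the preceding subsection.
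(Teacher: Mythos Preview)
Your proposal is correct. The paper does not actually supply a proof of this lemma: it is stated as a preliminary fact in the subsection on $p$-adic numbers, with the remark that further details can be found in the references \cite{9},\cite{28}. Both of your arguments are standard and complete; the first (that $C^k_n\in\Z$ and every integer has $p$-adic absolute value at most $1$) is the natural one-line justification and is entirely adequate here.
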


╬сючэрўшь $\G^{(m)}=\{ x\in\C_p : x^m=1\},\ m\in\N,$
$$
\G=\cup_{m=1}^{\infty}\G^{(m)}, \ \
\G_m=\cup_{j=1}^{\infty}\G^{(m^j)}, \ \ \G_u=\cup_{m:
(m,p)=1}^{\infty}\G_m.
$$

╫хЁхч $\t_{j,k} \ \ (j=\overline{1,k})$  юсючэрўшь $k$-Є√щ ъюЁхэ№
шч 1, яЁшўхь $\t_{1,k}=1$.\\[1.2mm]

\begin{lem}\label{2.3} ╤ыхфє■∙шх єЄтхЁцфхэш  тхЁэ√:
\begin{enumerate}
  \item ╧єёЄ№ $y^n=a$, уфх $a=\t_{j,n-1}$ фы  эхъюЄюЁюую
$j=\overline{1,n-1}$ ш $y\neq a$. ┼ёыш  $(n,p)=1$, Єю $y\in
S_1(a)$.

\item $\G_u\subset S_1(1)$;

\item $|C_{p^k}^j|_p\leq\frac{1}{p}$ {\it фы  ы■сюую}
$j=\overline{1,p^k-1}$;

\item  $\G_p\subset U_1(1)$.
\end{enumerate}
\end{lem}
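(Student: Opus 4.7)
The plan is to handle the four assertions in the order (3), (2), (1), (4), since (3) feeds (4) and parts (2) and (1) share a common cyclotomic core.

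For (3), I would exploit the identity $j\,C^j_{p^k} = p^k\,C^{j-1}_{p^k-1}$. Writing $j = p^s j'$ with $(j',p)=1$, the restriction $1 \le j \le p^k-1$ forces $s \le k-1$, so $|j|_p \ge p^{-(k-1)}$. Coupling this with $|C^{j-1}_{p^k-1}|_p \le 1$ from Lemma~\ref{2.2} yields $|C^j_{p^k}|_p \le p^{s-k} \le p^{-1}$.

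The common core of (1) and (2) is the sub-claim: if $\zeta \ne 1$ is a root of unity of order $d$ with $(d,p)=1$, then $|1-\zeta|_p = 1$. I would establish this by evaluating the factorization $\prod_{j=1}^{d-1}(x - \w^j) = (x^d-1)/(x-1)$ at $x = 1$ (for $\w$ a primitive $d$-th root), obtaining $\prod_{j=1}^{d-1}(1-\w^j) = d$. Since $|d|_p = 1$ while each factor satisfies $|1 - \w^j|_p \le 1$ by the ultrametric inequality, every factor must actually have norm $1$. Part (2) follows at once, since a non-trivial element of $\G_u$ has order prime to $p$. For part (1), I would note that $a^{n-1}=1$ gives $a^n = a$, so $y^n = a$ rewrites as $(y/a)^n = 1$; writing $y = a\zeta$ with $\zeta \ne 1$ an $n$-th root of unity and invoking $(n,p)=1$, the sub-claim produces $|y-a|_p = |a|_p\,|\zeta-1|_p = 1$.

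Finally, for (4) I would run a contraction argument. Given $\zeta \in \G^{(p^k)}$, set $y = \zeta - 1$; expanding $(1+y)^{p^k} = 1$ yields $y^{p^k} = -\sum_{j=1}^{p^k-1} C^j_{p^k}\,y^j$. Applying (3) and the ultrametric inequality gives $|y|_p^{p^k} \le p^{-1}\max_{1 \le j \le p^k-1} |y|_p^j$. Supposing $|y|_p \ge 1$ makes the maximum equal $|y|_p^{p^k-1}$, which forces $|y|_p \le p^{-1}$, a contradiction; hence $|y|_p < 1$ and $\zeta \in U_1(1)$. I do not foresee any serious obstacle: the only subtle move is recognizing that (1) and (2) rest on the same cyclotomic factorization, while (3) and (4) are standard $p$-adic manipulations.
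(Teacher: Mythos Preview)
Your argument is correct in each of the four parts. The paper itself offers no proof of Lemma~\ref{2.3}: it appears in the preliminaries section as a stated fact (alongside Lemmas~\ref{2.1} and~\ref{2.2}), presumably imported from the references, in particular~\cite{9}. So there is no in-paper argument to compare against. The techniques you use---the cyclotomic product $\prod_{j=1}^{d-1}(1-\omega^j)=d$ for the common core of (1) and (2), the identity $j\,C^j_{p^k}=p^k\,C^{j-1}_{p^k-1}$ combined with Lemma~\ref{2.2} for (3), and the binomial expansion plus the bound from (3) for (4)---are the standard ones and go through without difficulty. One small caveat: as literally stated, item (2) fails at the trivial root $1\in\Gamma_u$, since $1\notin S_1(1)$; you implicitly restrict to non-trivial elements, which is exactly what the paper needs wherever it invokes this item.
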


╘єэъЎш   $f:U_r(a)\to\C_p$ эрч√трхЄё  {\it рэрышЄшўхёъющ }, хёыш
хх ьюцэю яЁхфёЄртшЄ№ т тшфх
$$
f(x)=\sum_{n=0}^{\infty}f_n(x-a)^n, \ \ \ f_n\in \C_p,$$ чфхё№
ёїюфшьюёЄ№ яюэшьрхЄё  т ёь√ёых эюЁь√ эр °рЁх $U_r(a)$. ┴юыхх
яюфЁюсэю юс рэрышЄшўхёъшї ЇєэъЎш ї ьюцэю эрщЄш т \cite{29}.

╬ёэют√  $p$-рфшўхёъюую рэрышчр ш  $p$-рфшўхёъющ ьрЄхьрЄшўхёъющ
Їшчшъш фрэ√  т \cite{7},\cite{17}.

\subsection{─шэрьшўхёъшх ёшёЄхь√ эр $\C_p$}

┬ ¤Єюь яєэъЄх ь√ эряюьэшь (ёь., эряЁшьхЁ, \cite{27},\cite{29})
эхъюЄюЁ√х шчтхёЄэ√х ЇръЄ√, юЄэюё ∙шхё  ъ фшэрьшўхёъющ ёшёЄхьх
$(f,U)$  эр $\C_p$, уфх  $f: x\in U\to f(x)\in U$ - рэрышЄшўхёър 
ЇєэъЎш  ш  $U=U_r(a)$ шыш $\C_p$.

 ╧єёЄ№ $f:U\to U$  ты хЄё  рэрышЄшўхёъющ ЇєэъЎшхщ. ╬сючэрўшь
$f^n(x)=\underbrace{f\circ\dots\circ f}_n(x)$, уфх $x\in U$. ┼ёыш
$f(x_0)=x_0$, Єюуфр $x_0$ эрч√трхЄё  {\it эхяюфтшцэющ Єюўъющ}.
═хяюфтшцэр  Єюўър $x_0$ эрч√трхЄё  {\it яЁшЄ уштр■∙хщ}, хёыш
ёє∙хёЄтєхЄ юъЁхёЄэюёЄ№ $U(x_0)$ Єюўъш $x_0$ Єрър , ўЄю фы  тёхї
$y\in U(x_0)$ шьххЄ ьхёЄю $\lim\limits_{n\to\infty}f^n(y)=x_0$.
┼ёыш $x_0$ - яЁшЄ уштр■∙р  Єюўър, Єюуфр {\it рЄЄЁръЄшЁє■∙шь
срёёхщэюь} эрч√трхЄё  ьэюцхёЄтю
$$
A(x_0)=\{x\in \C_p :\ f^n(x)\to x_0, \ n\to\infty\}.
$$

╧єёЄ№ $x_0$ - эхяюфтшцэр  Єюўър фы  ЇєэъЎшш $f(x)$. ├ютюЁ Є, ўЄю
°рЁ $U_r(x_0)$ (эрїюф ∙шщё  т $U$) сєфхЄ {\it фшёъюь ╟шухы }, хёыш
ърцфр  ёЇхЁр $S_{\r}(x_0)$, $\r<r$  ты хЄё  шэтрЁшрэЄэющ ёЇхЁющ
юЄэюёшЄхы№эю  $f(x)$, Є.х. хёыш $x\in S_{\r}(x_0)$, Єю тёх
шЄхЁрЎшш ¤Єющ Єюўъш эрїюф Єё  т Єющ цх ёрьющ ёЇхЁх, Є.х.
$f^n(x)\in S_{\r}(x_0)$ фы  тёхї $n=1,2\dots$. ╬с·хфшэхэшх тёхї
фшёъют ╟шухыр ё ЎхэЄЁюь т Єюўъх $x_0$ эрч√трхЄё  {\it ьръёшьры№э√ь
фшёъюь ╟шухы } ш юэю юсючэрўрхЄё  ъръ $SI(x_0)$.

{\bf ╟рьхўрэшх.}\cite{9} ┬ ъюьяыхъёэющ ухюьхЄЁшш ЎхэЄЁ фшёър
юфэючэрўэю юяЁхфхы хЄё  фшёъюь, ш Ёрчышўэ√х эхяюфтшцэ√х Єюўъш эх
ьюуєЄ шьхЄ№ юфшэ ш ЄюЄ цх фшёъ ╟шухы . ┬ эхрЁїшьхфютюь ёыєўрх
ЎхэЄЁ фшёър - ¤Єю ы■ср  Єюўър, яЁшэрфыхцр∙р  ¤Єюьє фшёъє. ╧ю¤Єюьє
т яЁшэЎшях Ёрчышўэ√ь эхяюфтшцэ√ь Єюўърь ьюцхЄ ёююЄтхЄёЄтютрЄ№ юфшэ
ш ЄюЄ цх фшёъ ╟шухы , ўЄю фхырхЄ эхрЁїшьхфют√щ ёыєўрщ юЄышўэ√ь юЄ
юс√ўэюую ёыєўр .

╧єёЄ№ $x_0$ - эхяюфтшцэр  Єюўър рэрышЄшўхёъющ ЇєэъЎшш $f(x)$.
╧юыюцшь
$$
\l=\frac{d}{dx}f(x_0).
$$

╥юўър  $x_0$ эрч√трхЄё  {\it рЄЄЁръЄшЁє■∙хщ}, хёыш $0\leq
|\l|_p<1$; {\it ёхфыютющ}, хёыш $|\l|_p=1$ ш {\it юЄЄрыъштр■∙хщ},
хёыш  $|\l|_p>1$.\\[1mm]

\begin{thm}\label{2.4}\cite{9} ╧єёЄ№  $x_0$ - эхяюфтшцэр  Єюўър
рэрышЄшўхёъющ ЇєэъЎшш $f:U\to U$. ╥юуфр ёыхфє■∙шх єЄтхЁцфхэш 
тхЁэ√:
\begin{enumerate}

\item [1]. хёыш $x_0$ рЄЄЁръЄшЁє■∙р  Єюўър фы   $f$, Єю юэр  ты хЄё 
яЁшЄ уштр■∙хщ  фы  фшэрьшўхёъющ ёшёЄхь√ $(f,U)$. ┼ёыш ўшёыю $r>0$
єфютыхЄтюЁ хЄ эхЁртхэёЄтє
$$
q=\max_{1\leq
n<\infty}\bigg|\frac{1}{n!}\frac{d^nf}{dx^n}(x_0)\bigg|_pr^{n-1}<1
\eqno(2.2)
$$
ш  $U_r(x_0)\subset U$, Єю $U_r(x_0)\subset A(x_0)$;

\item [2]. хёыш $x_0$ - ёхфыютр  Єюўър фы  $f$, Єю юэр  ты хЄё 
ЎхэЄЁюь фшёър ╟шухы . ┼ёыш ўшёыю $r>0$ єфютыхЄтюЁ хЄ эхЁртхэёЄтє
$$
s=\max_{2\leq
n<\infty}\bigg|\frac{1}{n!}\frac{d^nf}{dx^n}(x_0)\bigg|_pr^{n-1}<
1 \eqno(2.3)
$$
ш $U_r(x_0)\subset U$, Єю  $U_r(x_0)\subset SI(x_0)$;

\end{enumerate}

\end{thm}

\section{─шэрьшўхёър  ёшёЄхьр $ f(x)=x^{2n+1}+ax^{n+1} $}

┬ ¤Єюь яєэъЄх ЁрёёьюЄЁшь рэрышЄшўхёъє■ ЇєэъЎш■ $ f:\C_p\to \C_p$,
юяЁхфхыхээє■ яю ЇюЁьєых:
$$ f(x)=x^{2n+1}+ax^{n+1}, \eqno (3.1)$$
уфх $|a|_p<1$, $a\ne 0$, $n\in \N$.

─рыхх сєфхь яЁхфяюырурЄ№, ўЄю $p\geq 3$. ╚ёяюы№чє  ЄхюЁхьє 2.4
фюърцхь эхъюЄюЁ√х ётющёЄтр фы  фшэрьшўхёъющ ёшёЄхь√ (3.1).

╟рьхЄшь, ўЄю эхяюфтшцэ√ьш Єюўърьш ЇєэъЎшш (3.1)  ты ■Єё 
$\{x_i\}_{i=\overline{1,2n}}$, уфх $ x_i^n=c_+$, $ i=\ol{1,n}$ ш
$x_j^n=c_-$, $j=\ol{n+1,2n}$, чфхё№
$$
c_{\pm}=\frac{1}{2}(-a\pm\sqrt{a^2+4}).\eqno(3.2)
$$

─рыхх ўхЁхч $c$ сєфхь юсючэрўрЄ№ ышсю $c_-$, ышсю $c_+$.\\[1mm]

\begin{lem}\label{3.1} ╧Ёш $|a|_p<1$ ёяЁртхфыштю ЁртхэёЄтю
$|c_{\pm}|_p=1$. \end{lem}

\begin{proof} ┬ ёшыє $p\geq 3$, шьххь $|4|_p=1$, юЄъєфр шч ётющёЄтр 1) эюЁь√
$|\cdot|_p$ яюыєўшь эєцэюх ЁртхэёЄтю.  \end{proof}

╚ч ыхьь√ 3.1 т ърўхёЄтх ёыхфёЄтш  яюыєўшь

\begin{lem}\label{3.2} $ \{x_j\}_{j=1}^{2n}\subset S_1(0).$
\end{lem}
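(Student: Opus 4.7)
The plan is to reduce the claim directly to Lemma~\ref{3.1}, using only the multiplicativity of the $p$-adic absolute value on $\C_p$. By the description immediately preceding the lemma, every non-zero fixed point $x_j$ of $f$ satisfies a relation of the form $x_j^n = c$, where $c$ is either $c_+$ or $c_-$ from formula (3.2).

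The first step will be to apply $|\cdot|_p$ to both sides of the identity $x_j^n = c$. Since the absolute value on $\C_p$ is multiplicative, this yields $|x_j|_p^n = |c|_p$. Next I would invoke Lemma~\ref{3.1}, which states that $|c_+|_p = |c_-|_p = 1$ under the standing hypothesis $|a|_p < 1$. Combining the two gives $|x_j|_p^n = 1$. Because $|x_j|_p$ is a non-negative real number and the unique non-negative real $n$-th root of $1$ is $1$ itself, it follows that $|x_j|_p = 1$, which is precisely the assertion $x_j \in S_1(0)$.

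There is essentially no obstacle here: the whole argument is a one-line consequence of Lemma~\ref{3.1} combined with multiplicativity of $|\cdot|_p$. The only minor point worth mentioning is that $|c|_p = 1 \ne 0$ forces $c \ne 0$, and hence $x_j \ne 0$, so the relation $x_j^n = c$ is non-degenerate and the extraction of absolute values is legitimate; thus no separate treatment of a potential zero root is required.
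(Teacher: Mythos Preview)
Your proposal is correct and matches the paper's own argument exactly: the paper simply states that Lemma~\ref{3.2} follows from Lemma~\ref{3.1} together with the multiplicativity of the $p$-adic absolute value, which is precisely the computation you wrote out.
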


\begin{lem}\label{3.3}  ╧єёЄ№ $(2n+1,p)=1.$ ┼ёыш $f(y)=x_j$ фы 
эхъюЄюЁюую $j=\ol{1,2n}$ ш $y\ne x_j$, Єюуфр $y\in S_1(x_j).$
\end{lem}

\begin{proof} ╧Ёхфяюыюцшь, ўЄю $y\in U_1(x_j)$, Єюуфр
$y=x_j+\g,$ уфх $|\gamma|_p<1.$ ╤ыхфютрЄхы№эю, \bea
0&=&|y^{2n+1}+ay^{n+1}-x_j^{2n+1}-ax_j^{n+1}|_p\nonumber\\
&=&\bigg|\sum_{k=1}^{2n+1}C_{2n+1}^k\g^kx_j^{2n+1-k}+
a\sum_{l=0}^{n+1}C_{n+1}^l\g^l x_j^{n+1-l}\bigg|_p \nonumber \\
&=&|\g|_p|(2n+1)x_j^{2n}+\Delta_1|_p=|\g|_p,\nonumber \eea уфх ь√
шёяюы№чютрыш $(2n+1,p)=1$ ш $|\Delta_1|_p<1.$ ╬Єё■фр ёыхфєхЄ, ўЄю
$y\notin U_1(x_j).$ ╟эрўшЄ, $|y|_p\geq 1.$ ╬Єъєфр
$|y|_p^{2n+1}>|a|_p|y|_p^{n+1},$ Єръ ъръ $|a|_p<1$. ╚ч
$|f(y)|_p=|x_j|_p=1$ эрїюфшь, ўЄю $|y|_p=1$, ёыхфютрЄхы№эю, $y\in
S_1(0)\setminus U_1(x_j).$ ┬ ёшыє ыхьь√ 2.1 шьххь $y\in S_1(x_j).$
╦хььр фюърчрэр.
\end{proof}

\begin{thm}\label{3.4}
╤ыхфє■∙шх єЄтхЁцфхэш  тхЁэ√:
\begin{enumerate}

\item [(i)] ┼ёыш $(2n+1,p)=1$, Єюуфр $x_j$ - ЎхэЄЁ фшёър ╟шухы  ш
$SI(x_j)=U_1(x_j).$

\item [(ii)] ╧єёЄ№ $n=p^l,$ $l=1,2,...$ ╥юуфр
$SI(x_j)=U_1(c^{1/n})$ фы  ы■сюую $j=\ol{1,n}.$
\end{enumerate}
\end{thm}

\begin{proof} (i) ╟рьхЄшь, ўЄю
$$
|f'(x_j)|_p=|x_j^n|_p|(2n+1)x_j^n+a(n+1)|_p=1
$$
ш \bea
\bigg|\frac{1}{m!}f^{(m)}(x_j)\bigg|_p&=&\bigg|\frac{(2n+1)!}{m!(2n+1-m)!}x^{2n-m}_j+
\frac{(n+1)!}{(n+1-m)!m!}x_j^{n-m}\bigg|_p \nonumber \\
&\leq& \max\{|C^m_{2n+1}|_p,|C^m_{n+1}|_p\}\leq 1.\nonumber \eea
╧ЁютхЁшь єёыютшх ЄхюЁхь√ 2.4:
$$
q=\max_{1\leq m<\infty}\bigg|\frac{1}{m!}f^{(m)}(x_j)\bigg|_pr^{m-1}\leq
r^{m-1}\max\{|C^m_{2n+1}|_p,|C^m_{n+1}|_p\}\leq 1.
$$
▌Єю эхЁртхэёЄтю т√яюыэ хЄё , хёыш т√схЁхь $r<1$. ╥ръшь юсЁрчюь, т
ёшыє ЄхюЁхь√ 2.4, эхяюфтшцэр  Єюўър $x_j$   ты хЄё  ЎхэЄЁюь фшёър
╟шухы . ╤ыхфютрЄхы№эю, $U_r(x_j)\subset SI(x_j).$ ─юърцхь ЄхяхЁ№,
ўЄю $f(S_1(x_j))\ne S_1(x_j)$ фы  ы■сюую $j$. ╧єёЄ№ $y$ Єръюх, ўЄю
$f(y)=x_j$, Єюуфр т ёшыє ыхьь√ 3.3 шьххь $y\in S_1(x_j)$. ╩Ёюьх
Єюую, $x_j\notin S_1(x_j).$ ╤ыхфютрЄхы№эю, $f(y)\notin S_1(x_j).$
╙ЄтхЁцфхэшх (i) фюърчрэю.

(ii) ╧єёЄ№ $x^n_i=c$, Є.х. $x_i^{p^l}=c$. ╥юуфр
$\bigg(\dsf{x_i}{c^{1/n}}\bigg)^n=1$, Є.х. $\dsf{x_i}{c^{1/n}}\in
\Gamma_p.$ ┬ ёшыє ыхьь√ 2.3, шьххь $\Gamma_p\subset U_1(1).$
╤ыхфютрЄхы№эю, $x_i\in U_1(c^{1/n}), $ Єръ ъръ $|c^{1/n}|_p=1.$
▀ёэю, ўЄю  $x_i\in U_1(x_i).$ ╬Єъєфр $U_1(x_i)=U_1(c^{1/n}).$ ┬
ёшыє (i) яюыєўшь $SI(x_i)=U_1(c^{1/n}), i=\ol{1,n}.$ ╥хюЁхьр
фюърчрэр. \end{proof}

\begin{lem}\label{3.5} ┼ёыш $x^n=c, \ \ y^n=c$ ш $x\ne y $,
$(p,n)=1,$ Єю $|x-y|_p=1.$
\end{lem}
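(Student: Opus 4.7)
My plan is to reduce the claim to Lemma 2.3(2), which states that every root of unity of order coprime to $p$ (other than $1$) lies on the sphere $S_1(1)$. The key step is the observation that if $x^n = y^n = c$, then the ratio $\zeta := x/y$ is an $n$-th root of unity, and this ratio is nontrivial because $x \neq y$.

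First, I would note that since $(n,p) = 1$ we have $\Gamma^{(n)} \subset \Gamma_u$, so $\zeta \in \Gamma_u \setminus \{1\}$. Lemma 2.3(2) then yields $|\zeta - 1|_p = 1$. Writing $x - y = y(\zeta - 1)$ and applying multiplicativity of $|\cdot|_p$ gives
$$
|x - y|_p = |y|_p \cdot |\zeta - 1|_p = |y|_p.
$$

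Second, I would identify $|y|_p$. In the context of Section 3 the constant $c$ equals one of the numbers $c_{\pm}$, so Lemma 3.1 provides $|c|_p = 1$, and the identity $|y|_p^n = |y^n|_p = |c|_p = 1$ forces $|y|_p = 1$. Combining this with the previous display yields $|x - y|_p = 1$, as required.

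I do not anticipate a genuine obstacle, since each step is a direct application of an already established lemma. The only subtlety worth flagging is the reading of Lemma 2.3(2): taken literally it would include the element $1 \in \Gamma_u$, which does not lie on $S_1(1)$. This causes no difficulty, because we apply the lemma only to the genuinely nontrivial root $\zeta \neq 1$.
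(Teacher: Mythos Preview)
Your proof is correct, but it follows a different route from the paper's own argument.

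The paper proceeds by contradiction and a direct binomial expansion: assuming $|x-y|_p<1$, it writes $x=y+\gamma$ with $|\gamma|_p<1$ and computes
\[
0=|x^n-y^n|_p=\Bigl|\sum_{k=1}^{n}C_n^k\gamma^{k}y^{n-k}\Bigr|_p
=|\gamma|_p\,\bigl|ny^{n-1}+\textstyle\sum_{k\ge 2}C_n^k\gamma^{k-1}y^{n-k}\bigr|_p=|\gamma|_p,
\]
the last equality using $(n,p)=1$ (so $|n|_p=1$) together with $|y|_p=1$ and $|\gamma|_p<1$. This forces $\gamma=0$, contradicting $x\ne y$; the upper bound $|x-y|_p\le 1$ is automatic from $|x|_p=|y|_p=1$.

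Your argument instead factors $x-y=y(\zeta-1)$ with $\zeta=x/y\in\Gamma^{(n)}\subset\Gamma_u$ and invokes Lemma~2.3(2) to get $|\zeta-1|_p=1$. This is more structural: it recycles an already proved fact about roots of unity rather than redoing the ultrametric estimate from scratch. The paper's version, by contrast, is self-contained and does not lean on Lemma~2.3. Your remark about the literal statement of Lemma~2.3(2) (which as written would also cover $1\in\Gamma_u$) is accurate and appropriately handled, since you only apply it to $\zeta\ne 1$.
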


\begin{proof} ╧Ёхфяюыюцшь, ўЄю  $|x-y|_p<1$, Єю $x=y+\gamma$,\ \
$|\gamma|_p<1.$ ╚ёяюы№чє  $|c|_p=|y|_p=1$ ш
$|\sum^n_{k=2}C^k_n\gamma^{k-1}y^{n-k}|_p<1$ яюыєўшь \bea
0&=&|x^n-y^n|_p=\bigg|\sum^n_{k=1}C^k_n\gamma^{k}y^{n-k}\bigg|_p\nonumber \\
&=&|\gamma|_p \bigg|ny^{n-1}+\sum^n_{k=2}C^k_n\gamma^{k-1}y^{n-k}\bigg|_p=
|\gamma|_p.\nonumber \eea ╤ыхфютрЄхы№эю, $x=y$, ўЄю яЁюЄштюЁхўшЄ№
єёыютш■ ыхьь√. ╦хььр фюърчрэр. \end{proof}

\begin{lem}\label{3.6} ╧єёЄ№ $(2n+1,p)=1$ ш $(n,p)=1.$ ╥юуфр $
x_i\in S_1(c^{1/n})$ ш $ SI(x_i)\cap SI(x_j)=\emptyset ,\ \
i,j=\ol{1,n}, \ \ i\ne j.$
\end{lem}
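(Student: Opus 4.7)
The plan is to combine Lemma~3.5 (which gives the required separation between distinct $n$-th roots of $c$) with Theorem~3.4(i) (which identifies $SI(x_i)$ with the unit ball $U_1(x_i)$), and then use the strong triangle inequality to conclude disjointness.

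First I would fix one $n$-th root of $c$ and denote it by $c^{1/n}$. Since $(n,p)=1$, Lemma~3.5 applies to any two distinct solutions of $t^n = c$. Applied to the pair $(c^{1/n}, x_i)$ with $x_i \ne c^{1/n}$ it yields $|x_i - c^{1/n}|_p = 1$, which is precisely $x_i \in S_1(c^{1/n})$. Applied to the pair $(x_i, x_j)$ for $i \ne j$ with $i,j \in \{1,\dots,n\}$, the same lemma gives the separation $|x_i - x_j|_p = 1$.

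Next I would invoke Theorem~3.4(i), whose hypothesis $(2n+1,p)=1$ is in force, to obtain $SI(x_i) = U_1(x_i)$ and $SI(x_j) = U_1(x_j)$. If there were some $z \in U_1(x_i) \cap U_1(x_j)$, the ultrametric inequality would give
$$
|x_i - x_j|_p \le \max\{\,|z - x_i|_p,\; |z - x_j|_p\,\} < 1,
$$
contradicting the separation just established. Hence $SI(x_i) \cap SI(x_j) = \emptyset$.

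The main subtlety is purely in matching each hypothesis to its role: $(2n+1,p)=1$ is needed so that Theorem~3.4(i) describes the Siegel disks as unit balls, while $(n,p)=1$ is needed so that Lemma~3.5 produces distance exactly $1$ (rather than only ${<}1$) between distinct $n$-th roots of $c$. With this bookkeeping in place, the conclusion reduces to one application of the ultrametric triangle inequality, so I do not foresee any serious obstacle.
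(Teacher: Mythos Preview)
Your proof is correct and follows essentially the same path as the paper: invoke Theorem~3.4(i) to identify $SI(x_i)=U_1(x_i)$, use the separation $|x_i-x_j|_p=1$ between distinct $n$-th roots of $c$, and conclude disjointness via the ultrametric inequality. The only cosmetic difference is that for the claim $x_i\in S_1(c^{1/n})$ the paper argues through Lemma~2.3(2) (namely $x_i/c^{1/n}\in\Gamma_u\subset S_1(1)$), whereas you apply Lemma~3.5 directly to the pair $(c^{1/n},x_i)$; the two routes are equivalent.
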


\begin{proof} ╥ръ ъръ $x_i^n=c$, Єю $\dsf{x_i}{c^{1/n}}\in\Gamma_u.$ ╚ч ыхьь√
2.3 ёыхфєхЄ $\dsf{x_i}{c^{1/n}}\in S_1(1).$ ╬Єё■фр $ x_i\in
S_1(c^{1/n}),$ яюёъюы№ъє $ |c^{1/n}|_p=1.$ ┬ ёшыє ЄхюЁхь√ 3.4 (i),
$ SI(x_i)=U_1(x_i).$  ╧єёЄ№ $x_i\ne x_j$ ш $ y\in U_1(x_i)$, Є.х.
$ |y-x_i|_p<1.$ ╥юуфр, т ёшыє ыхьь√ 3.5, $|y-x_j|_p=1,$
ёыхфютрЄхы№эю, $y\notin U_1(x_j).$ ╥ръшь юсЁрчюь, $SI(x_i)\cap
SI(x_j)=\emptyset, \ \ i\ne j.$ ╦хььр фюърчрэр. \end{proof}

\begin{lem}\label{3.7} ╧єёЄ№ $(n,p)\ne 1, \ \ n=p^km, \ \ (m,p)=1$ ш $
c_{ij}=\xi_i\eta_j$, $ \xi_i\in \Gamma^{(m)}, \ \ \eta^{p^k}_j=c,
\ \ i=\ol{0,m-1}, \ \ j=\ol{0,p^k-1}.$  ╥юуфр
\begin{enumerate}
\item [(i)] $SI(c_{ij})=U_1(c_{ij})=SI(c_{il}), \ \
j,l=\ol{0,p^k-1}, j\ne l.$

\item [(ii)] $SI(c_{ij})\cap SI(c_{kl})=\emptyset, \ \ k\ne l$.
\end{enumerate}
\end{lem}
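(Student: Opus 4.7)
The plan is to reduce each assertion to a comparison of unit-ball radii, using the ultrametric structure of $\C_p$ together with the information about $m$-th and $p^k$-th roots of unity supplied by Lemma 2.3.

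First I would check that every $c_{ij}$ lies on $S_1(0)$: $|\xi_i|_p=1$ because $\xi_i$ is a root of unity, and $|\eta_j|_p=1$ since $\eta_j^{p^k}=c$ together with $|c|_p=1$ (Lemma 3.1). Next, because $(n,p)\ne 1$ forces $2n+1\equiv 1\pmod p$, the hypothesis of Theorem 3.4(i) holds, and that theorem applied to each fixed point $c_{ij}$ yields $SI(c_{ij})=U_1(c_{ij})$. Hence both parts of the lemma reduce to deciding when the open unit balls centred at distinct $c_{ij}$ coincide or are disjoint.

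For part (i), with $i$ fixed and $j\ne l$, note that $(\eta_j/\eta_l)^{p^k}=c/c=1$, so $\eta_j/\eta_l\in\Gamma^{(p^k)}\subset\Gamma_p$. Lemma 2.3(4) gives $|\eta_j/\eta_l-1|_p<1$, and multiplying by $|\xi_i\eta_l|_p=1$ yields $|c_{ij}-c_{il}|_p<1$. Thus $c_{ij}\in U_1(c_{il})$, which in an ultrametric forces $U_1(c_{ij})=U_1(c_{il})$, as desired.

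For part (ii), I would take $i\ne k$ (and $j,l$ arbitrary) and compute
\[
\frac{c_{ij}}{c_{kl}}-1 \;=\; \left(\frac{\xi_i}{\xi_k}-1\right) \;+\; \frac{\xi_i}{\xi_k}\left(\frac{\eta_j}{\eta_l}-1\right).
\]
Since $\xi_i/\xi_k$ is a nontrivial element of $\Gamma^{(m)}\subset\Gamma_u$ (using $(m,p)=1$), Lemma 2.3(2) gives $|\xi_i/\xi_k-1|_p=1$; the second summand has $p$-adic absolute value strictly less than $1$ by the computation in part (i). Property 1 of $|\cdot|_p$ (unequal valuations) then gives $|c_{ij}/c_{kl}-1|_p=1$, hence $|c_{ij}-c_{kl}|_p=1$. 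Consequently $U_1(c_{ij})\cap U_1(c_{kl})=\emptyset$, for any $y$ in the intersection would violate the ultrametric via $1=|c_{ij}-c_{kl}|_p\le\max\{|y-c_{ij}|_p,|y-c_{kl}|_p\}<1$.

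The only non-routine step is verifying that Theorem 3.4(i) applies to every $c_{ij}$, which amounts to the observation that $p\mid n$ implies $(2n+1,p)=1$. Beyond that, the main thing to keep straight is simply which clause of Lemma 2.3 governs which factor: the $m$-th-root clause (2) for the $\xi$-ratio that lives on $S_1(1)$, and the $p$-power clause (4) for the $\eta$-ratio that lives in $U_1(1)$. With that bookkeeping in hand, the argument is a clean application of the strong triangle inequality, with no substantive obstacle.
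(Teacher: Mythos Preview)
Your argument is correct and follows the same strategy as the paper: invoke Theorem~3.4(i) (legitimate since $p\mid n$ forces $(2n+1,p)=1$) to reduce to comparing the open unit balls $U_1(c_{ij})$, then use Lemma~2.3 to locate the $\eta$-ratios inside $U_1(1)$ and the nontrivial $\xi$-ratios on $S_1(1)$. The only cosmetic difference is that the paper obtains $|\xi_i-\xi_{i'}|_p=1$ by citing Lemma~3.5 (applied with $c=1$) rather than Lemma~2.3(2), and works additively with a common $\eta$-factor instead of your multiplicative ratio; your reading of the condition in~(ii) as ``first indices differ'' is exactly what the paper's proof establishes, despite the garbled ``$k\ne l$'' in the statement.
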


\begin{proof} (i) ┬ ¤Єюь ёыєўрх ыхуъю тшфхЄ№, ўЄю $(2n+1,p)=1.$ ╥юуфр,
ёюуырёэю ЄхюЁхьх 3.4, шьххь $SI(c_{ij})=U_1(c_{ij}).$ ╚ч єёыютш 
ыхьь√ яюыєўшь $\eta_i\in U_1(c^{1/p^k})$, юЄъєфр $\xi_i\eta_j\in
U_1(\xi_ic^{1/p^k}).$ ╤ фЁєующ ёЄюЁюэ√, $c_{ij}=\xi_i\eta_j \in
U_1(c_{ij}) $ ш ёыхфютрЄхы№эю, $U_1(c_{ij})=V_1(\xi_i c^{1/p^k}),
\ \ \forall j=\ol{1, p^k-1}.$

(ii) ┬ ёшыє єёыютш  ыхьь√ $ \xi_i\in \Gamma_u$, юЄё■фр шч ыхьь√
2.3 яюыєўшь $\xi_i \in S_1(1).$ ╟рьхЄшь, ўЄю $|\xi_i-\xi_j|_p=1$
(ёь. ыхььє 3.5) яЁш $i\ne j$.  ╧єёЄ№ $y\in U_1(c_{ij}),$  Єюуфр
$$ |y-c_{ik}|_p=|y-c_{ik}+\eta_k(\xi_i-\xi_j)|_p=1.$$ ▌Єю
ючэрўрхЄ, ўЄю $ y\not\in U_1(c_{jk}).$ ╤ыхфютрЄхы№эю, т ёшыє (i),
шьххь
$$SI(c_{ij})\cap SI(c_{kl})=\emptyset, \ \ k\ne l.$$
╦хььр фюърчрэр. \end{proof}

\begin{lem}\label{3.8} ┼ёыш $(2n+1, p)\ne 1, \ \ k\geq 1,$  Єюуфр эхяюфтшцэ√х Єюўъш
 ты ■Єё  рЄЄЁръ\-Єш\-Ёє\-■∙шьш. ┴юыхх Єюую $ U_1(x_j)\subset A(x_j)$ фы  ы■сюую
$j=\ol{1,2n}.$
\end{lem}

\begin{proof} ╧єёЄ№ $2n+1=p^km, \ \ k\geq 1 $ ё $(m,p)=1.$ ╥юуфр шьххь
$|f'(x_j)|_p<1.$ ╤ыхфютрЄхы№эю, ърцфр  эхяюфтшцэр  Єюўър  ты хЄё 
рЄЄЁръЄшЁє■∙шь. ╫Єюс√ юяЁхфхышЄ№ рЄЄЁръЄєЁє■∙шщ срёёхщэ ь√
шёяюы№чєхь єёыютшх (2.2) ЄхюЁхь√ 2.4, ъюЄюЁюх шьххЄ тшф
$$ q=\max_{1\leq n<\infty}\{|C^m_{2n+1}|_p,
|C^m_{n+1}|_p\}r^{n-1}<1.$$ ┼ёыш $r<1$,  ¤Єю єёыютшх т√яюыэ хЄё .
╥ръшь юсЁрчюь, $U_1(x_j)\subset A(x_j).$  \end{proof}

\begin{lem}\label{3.} ┼ёыш $(2n+1, p)\ne 1$,  Єю
$$ \bigcup_{\xi\in \Gamma_m}U_1(x_i\xi)\subset A(x_i).$$
\end{lem}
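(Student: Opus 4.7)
The plan is to reduce the general $\xi\in\G_m$ to the special case $\xi=1$, which is exactly Lemma 3.8. Since $\xi\in\G_m=\bigcup_{j}\G^{(m^j)}$, there is a smallest $j\geq 0$ with $\xi^{m^j}=1$, and I will induct on $j$ by propagating the desired inclusion one application of $f$ at a time.

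The heart of the argument will be the single-step containment
$$
f\bigl(U_1(x_i\xi)\bigr)\subset U_1\bigl(x_i\xi^{2n+1}\bigr).
$$
To establish it, I would write a generic point as $y=x_i\xi(1+\d')$ with $|\d'|_p<1$ and, using $x_i^n=c$, expand
$$
f(y)-x_i\xi^{2n+1}=x_i\xi^{2n+1}\bigl[c^2(1+\d')^{2n+1}-1\bigr]+ax_ic\,\xi^{n+1}(1+\d')^{n+1}.
$$
The second summand has $p$-adic norm $|a|_p<1$. For the first, the fixed-point relation $c^2+ac=1$ gives $|c^2-1|_p=|ac|_p<1$, while Lemma 2.2 together with $|\d'|_p<1$ yields $\bigl|(1+\d')^{2n+1}-1\bigr|_p\leq|\d'|_p<1$; combining these in the ultrametric norm gives $\bigl|c^2(1+\d')^{2n+1}-1\bigr|_p<1$. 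Hence $|f(y)-x_i\xi^{2n+1}|_p<1$, as required.

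Iterating the single-step containment yields
$$
f^s\bigl(U_1(x_i\xi)\bigr)\subset U_1\bigl(x_i\xi^{(2n+1)^s}\bigr)=U_1\bigl(x_i\xi^{p^{ks}m^s}\bigr),
$$
so at $s=j$ we have $\xi^{p^{kj}m^j}=(\xi^{m^j})^{p^{kj}}=1$ and therefore $f^j\bigl(U_1(x_i\xi)\bigr)\subset U_1(x_i)$. Lemma 3.8 then gives $U_1(x_i)\subset A(x_i)$, so every orbit starting in $U_1(x_i\xi)$ enters $A(x_i)$ after at most $j$ iterations and converges to $x_i$, which is the claimed inclusion.

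The main obstacle I anticipate is the joint estimate in the single-step containment: one must simultaneously exploit $|a|_p<1$ (to control both $c^2-1$ and the $x^{n+1}$ term) and the ultrametric bound on the binomial expansion of $(1+\d')^{2n+1}$, taking care that both ingredients stay strictly below $1$. Once that estimate is firmly in place, the induction on the exponent $j$ defined by $\xi^{m^j}=1$, and the concluding appeal to Lemma 3.8, are essentially formal.
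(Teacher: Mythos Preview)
Your proof is correct and follows essentially the same strategy as the paper: show that iterating $f$ carries $U_1(x_i\xi)$ into $U_1(x_i)$ (because each application of $f$ replaces $\xi$ by $\xi^{2n+1}$ up to terms of norm $<1$, and $\xi^{(2n+1)^j}=1$ once $\xi^{m^j}=1$), then invoke Lemma~3.8. The paper performs the estimate in one block by writing $f^{k}(x+\gamma)=(x+\gamma)^{(2n+1)^k}+\Delta_\gamma$ with $|\Delta_\gamma|_p<1$ and comparing $f^k(y)$ directly with $f^k(x_i)=x_i$, whereas you iterate the single-step containment $f(U_1(x_i\xi))\subset U_1(x_i\xi^{2n+1})$; the organization differs slightly but the content is the same.
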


\begin{proof} ╧єёЄ№ $2n+1=p^km, \ \ (m,p)=1.$ ╨рёёьюЄЁшь $\xi \in
\Gamma_{m^k}, \ \ y\in U_1(x_i\xi).$ ╥юуфр $y=x_i\xi+\gamma $ ш $
|\gamma|_p<1.$ ╟рьхЄшь, ўЄю
$$
f^{k}(x+\gamma)=f(f^{k-1}(x+\gamma))=(x+\gamma)^{(2n+1)^k}
+\Delta_{\gamma}.
$$
┬ ёшыє $|a|_p<1$,  хёыш $|y|_p=1,$ Єю $|f^{k}(y)|_p=1$  фы 
$\forall k \in N.$  ╥ръшь юсЁрчюь, шч
$f^{k}(y)-(x_i\xi+\gamma)^{(2n+1)^k}=\Delta_{\gamma}$  ёыхфєхЄ
$|\Delta_{\gamma}|_p<1$  фы   $\forall |\gamma|_p<1.$ ╥хяхЁ№
ЁрёёьюЄЁшь \bea
|f^{(k)}(y)-x_i|_p&=&|f^{(k)}(y)-f^{(k)}(x_i)|_p\nonumber \\
&=&|(x_i\xi+\gamma)^{(2n+1)^k}+\Delta_{\gamma}-x^{(2n+1)^k}_i-\Delta_0|_p\nonumber\\
&=&\bigg|\gamma\sum^{(2n+1)^k}_{l=1}C^l_{(2n+1)^k}
\gamma^{l-1}(x_i\xi)^{(2n+1)^k-l}
+\Delta_{\gamma}-\Delta_0\bigg|_p<1.\nonumber \eea ╤ыхфютрЄхы№эю,
$f^{k}(y)\in U_1(x_i)\subset A(x_i).$ ╬Єъєфр $U_1(x_i\xi)\subset
A(x_i)$ ш
$$ \bigcup_{\xi \in \Gamma_m}U_1(x_i\xi)\subset A(x_i).$$
╦хььр фюърчрэр. \end{proof}

\begin{lem}\label{3.10} ┼ёыш  $x^n=c_+, \ \ y^n=c_- $, Єюуфр $|x-y|_p=1.$
\end{lem}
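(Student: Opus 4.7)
The plan is to mimic the strategy of Lemma 3.5, but with the extra ingredient that $c_+$ and $c_-$ are separated by distance exactly $1$, not $0$, so the argument becomes a contradiction rather than an identity.

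First I would pin down $|c_+ - c_-|_p$. From the explicit formula (3.2), one reads off $c_+ - c_- = \sqrt{a^2+4}$, so $|c_+-c_-|_p^2 = |a^2+4|_p$. Since $p\geq 3$ gives $|4|_p = 1$ and the hypothesis $|a|_p < 1$ gives $|a^2|_p < 1$, the first property of the ultrametric (distinct absolute values) yields $|a^2+4|_p = 1$, hence $|c_+ - c_-|_p = 1$. This is the target value that the binomial expansion must fail to reach.

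Next I would argue by contradiction in the same style as Lemmas 3.3 and 3.5. Assume $|x-y|_p < 1$, and write $x = y + \gamma$ with $|\gamma|_p < 1$. By Lemma 3.1 and the equations $x^n=c_+$, $y^n=c_-$ we have $|x|_p = |y|_p = 1$. Expand
\[
x^n - y^n \;=\; \sum_{k=1}^{n}\binom{n}{k}\gamma^{k}y^{n-k},
\]
and apply Lemma 2.2 ($|\binom{n}{k}|_p\le 1$) together with $|y|_p=1$ and $|\gamma|_p^{k}\le |\gamma|_p$ for $k\geq 1$. The strong triangle inequality then gives $|x^n - y^n|_p \leq |\gamma|_p < 1$. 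But the left-hand side equals $c_+ - c_-$, whose absolute value is $1$ by the first step. This contradiction forces $|x-y|_p \ge 1$, and since both $x$ and $y$ lie on $S_1(0)$, the reverse inequality is automatic, giving $|x-y|_p = 1$.

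There is no real obstacle: both steps are short ultrametric manipulations. The only thing one must not overlook is the standing assumption $p\geq 3$ from Section 3, which is what makes $|4|_p=1$ and drives the separation $|c_+-c_-|_p=1$; without it the argument collapses at the very first step. Note also that, unlike Lemma 3.5, no coprimality hypothesis between $n$ and $p$ is needed, because the contradiction is obtained from the size of $c_+-c_-$ rather than from the derivative term $ny^{n-1}$.
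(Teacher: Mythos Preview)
Your argument is correct and follows essentially the same line as the paper: first show $|c_+-c_-|_p=|\sqrt{a^2+4}|_p=1$, then exploit the relation between $x^n-y^n$ and $x-y$ together with $|x|_p=|y|_p=1$. The only cosmetic difference is that the paper uses the factorization $x^n-y^n=(x-y)\sum_{k=0}^{n-1}x^{k}y^{n-1-k}$ to obtain $1=|x^n-y^n|_p\le |x-y|_p\le 1$ directly, whereas you reach the same conclusion by contradiction via the binomial expansion in the style of Lemma~3.5.
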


\begin{proof} ╚ч юяЁхфхыхэш  $c_\pm$ (ёь. (3.2)) эрїюфшь, ўЄю
$|c_+-c_-|_p=|\sqrt{a^2+4}|_p=1.$ ╤ыхфютрЄхы№эю,
$$ 1=|x^n-y^n|=|x-y|_p\bigg|\sum^{n-1}_{k=1}x^ky^{n-k}\bigg|_p\leq
|x-y|_p\leq 1$$ ╬Єъєфр $|x-y|_p=1.$ ╟фхё№ ь√ шёяюы№чютрыш, ўЄю
$|x|_p=|y|_p=|c_{\pm}|_p=1.$ \end{proof}

\begin{lem}\label{3.11} ╧єёЄ№ $p\geq 3$, $ (2n+1,p)=1$  ш $ (n,p)=1.$ Єюуфр
$$ SI(x_i)\cap SI(x_{n+j})=\emptyset, \ \  i,j=\ol{1,n}.$$
\end{lem}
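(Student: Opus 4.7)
The plan is to reduce the statement to a direct application of the previously established lemmas plus a one-line ultrametric separation argument. Since the hypotheses $(2n+1,p)=1$ and $(n,p)=1$ are exactly those of Theorem \ref{2.4}(i) (and of Lemma \ref{3.6}), I first invoke Theorem \ref{3.4}(i) to get
$$
SI(x_i)=U_1(x_i),\qquad SI(x_{n+j})=U_1(x_{n+j}),\qquad i,j=\ol{1,n}.
$$
This converts the claim about Siegel disks into the purely metric statement
$U_1(x_i)\cap U_1(x_{n+j})=\emptyset$.

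Next, I apply Lemma \ref{3.10} with $x=x_i$ and $y=x_{n+j}$: recalling from (3.2) and the enumeration of fixed points that $x_i^n=c_+$ and $x_{n+j}^n=c_-$, the lemma yields
$$
|x_i-x_{n+j}|_p=1.
$$

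Finally, I use the strong triangle inequality (property 1) from Section 2.1 of the preliminaries). Suppose $y\in U_1(x_i)$, so $|y-x_i|_p<1$. Since $|y-x_i|_p<1=|x_i-x_{n+j}|_p$, property 1) gives
$$
|y-x_{n+j}|_p=\big|(y-x_i)+(x_i-x_{n+j})\big|_p=\max\{|y-x_i|_p,\,|x_i-x_{n+j}|_p\}=1,
$$
so $y\notin U_1(x_{n+j})$. Hence $U_1(x_i)\cap U_1(x_{n+j})=\emptyset$, which is what we needed.

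The argument is essentially bookkeeping once Lemma \ref{3.10} is in hand, so there is no real obstacle; the only thing to double-check is that the hypothesis $(n,p)=1$ is used solely through Theorem \ref{3.4}(i) (ensuring $SI(x_{\cdot})=U_1(x_{\cdot})$) and does not need to be re-invoked in the ultrametric step, which depends only on the separation estimate $|c_+-c_-|_p=1$ already packaged in Lemma \ref{3.10}.
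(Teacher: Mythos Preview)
Your argument is correct and is exactly the approach the paper takes: its proof simply says to repeat the proof of Lemma~\ref{3.6} with Lemma~\ref{3.10} replacing Lemma~\ref{3.5}, which is precisely what you have written out (invoke Theorem~\ref{3.4}(i) to get $SI(x_\cdot)=U_1(x_\cdot)$, use Lemma~\ref{3.10} for $|x_i-x_{n+j}|_p=1$, then the ultrametric inequality). One small slip: your first parenthetical reference should be to Theorem~\ref{3.4}(i), not Theorem~\ref{2.4}(i), and note that Theorem~\ref{3.4}(i) only needs $(2n+1,p)=1$, so the hypothesis $(n,p)=1$ is in fact not used anywhere in this lemma.
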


\begin{proof} └эрыюушўэю фюърчрЄхы№ёЄтє ыхьь√ 3.6, эю чфхё№ шёяюы№чєхЄё  ыхььр
3.10 тьхёЄю ыхьь√ 3.5. \end{proof}

╬с·хфшэшт яюыєўхээ√х Ёхчєы№ЄрЄ√ ¤Єюую яєэъЄр ьюцэю ёЇюЁьєышЁютрЄ№
ёыхфє■∙є■ ЄхюЁхьє.

\begin{thm}\label{3.12}
╤ыхфє■∙шх єЄтхЁцфхэш  тхЁэ√:
\begin{enumerate}
  \item [(i)]  ┼ёыш $(2n+1, p)=1,$  Єюуфр $x_j$ - ЎхэЄЁ фшёър
╟шухы  ш $SI(x_j)=U_1(x_j), \ \ j=\ol{1,2n}.$
  \item[(ii)] ╧єёЄ№ $n=p^l, l\in \N.$ ╥юуфр $SI(x_j)=U_1(c_+^{1/n}), \ \
j=\ol{1,n}$ ш $SI(x_j)=U_1(c_-^{1/n}), \ \ j=\ol{n+1,2n}.$
   \item[(iii)] ╧єёЄ№ $(2n+1, p)=1$  ш $(n,p)=1.$  ╥юуфр  $x_i\in
S_1(c_+^{1/n}), \ \ i=\ol{1,n}$  ш $x_j\in S_1(c_-^{1/n}), \ \
j=\ol{n+1,2n}.$  ┴юыхх Єюую, $SI(x_i)\cap SI(x_j)=\emptyset$ фы 
ы■с√ї $i,j=\ol{1,2n}, \ \ i\ne j,$.
    \item [(iv)] ╧єёЄ№ $(n,p)\ne 1$, $n=p^km, \ \ (m,p)=1$ ш
$c_{ij}=\xi_i\eta_j,\ \ \xi_i\in \Gamma^{(m)}, \  \
\eta^{p^k}_j=c, \ \ i=\ol{0,m-1}, j=\ol{0,p^k-1}.$ ╥юуфр
$SI(c_{ij})=U_1(c_{ij})=SI(c_{il}) $  яЁш $l,j=\ol{0,p^k-1}, \ \
l\ne j$  ш $SI(c_{ij})\cap SI(c_{kl})=\emptyset, \ \ k\ne i.$
    \item[(v)] ┼ёыш $(2n+1,p)\ne 1$, Єю
$$\bigcup_{\xi \in \Gamma_m}U_1(x_i\xi)\subset A(x_i).$$
\end{enumerate}
\end{thm}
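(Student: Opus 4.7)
The plan is to obtain Theorem \ref{3.12} by assembling the material of Lemmas \ref{3.1}--\ref{3.11} together with Theorem \ref{3.4}: each of the five items is an application of the appropriate earlier result to the two classes of fixed points $\{x_i : x_i^n = c_+\}$ and $\{x_j : x_j^n = c_-\}$ in turn.

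First, for item~(i), I would repeat the argument of Theorem \ref{3.4}(i) separately on every $x_j$. The identity $|f'(x_j)|_p = |x_j|_p^n\,|(2n+1)x_j^n + a(n+1)|_p = 1$ holds uniformly, since $|a|_p<1$, $|x_j|_p = 1$ by Lemma \ref{3.2}, and $(2n+1,p)=1$; the binomial bounds of Lemma \ref{2.2} together with the Siegel-disc criterion of Theorem \ref{2.4}(2) then give $U_1(x_j) \subset SI(x_j)$. For the reverse inclusion I would invoke Lemma \ref{3.3}: any $y \neq x_j$ with $f(y) = x_j$ lies in $S_1(x_j)$, so $f$ cannot preserve the sphere $S_1(x_j)$, whence $SI(x_j) = U_1(x_j)$.

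For item~(ii) I would apply Theorem \ref{3.4}(ii) twice, once to each value $c = c_\pm$. In both cases $n = p^l$ and Lemma \ref{2.3}(4) yield $x_j/c_\pm^{1/n} \in \Gamma_p \subset U_1(1)$, so the $n$ roots with $x_j^n = c_+$ (respectively $x_j^n = c_-$) collapse into the single ball $U_1(c_+^{1/n})$ (respectively $U_1(c_-^{1/n})$).

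For item~(iii) the location $x_i \in S_1(c_\pm^{1/n})$ repeats the opening lines of Lemma \ref{3.6}: under $(n,p) = 1$ each $x_i/c_\pm^{1/n}$ lies in $\Gamma_u \subset S_1(1)$ by Lemma \ref{2.3}(2). The disjointness of the Siegel discs splits into two cases --- within a single class this is Lemma \ref{3.6}, while across the two classes it is Lemma \ref{3.11}, which itself uses Lemma \ref{3.10} in place of Lemma \ref{3.5}. Item~(iv) is a restatement of Lemma \ref{3.7}, and item~(v) is a restatement of the preceding lemma giving $\bigcup_{\xi \in \Gamma_m} U_1(x_i\xi) \subset A(x_i)$, so both require no further work. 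The only real obstacle is bookkeeping: one must check that the hypotheses of each quoted lemma apply uniformly to both $c_+$ and $c_-$, and that Lemma \ref{3.10} genuinely supplies the cross-class disjointness in~(iii) that cannot be read off from Theorem \ref{3.4} alone.
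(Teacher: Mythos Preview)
Your proposal is correct and matches the paper's approach exactly: the paper gives no separate proof of Theorem~\ref{3.12} at all, merely stating that it is obtained by combining the preceding results (Theorem~\ref{3.4} and Lemmas~\ref{3.5}--\ref{3.11}). Your item-by-item attribution --- (i) from Theorem~\ref{3.4}(i), (ii) from Theorem~\ref{3.4}(ii) applied to each of $c_\pm$, (iii) from Lemmas~\ref{3.6} and~\ref{3.11}, (iv) from Lemma~\ref{3.7}, and (v) from Lemma~\ref{3.} --- is in fact more explicit than what the paper provides.
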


{\bf ┴ыруюфрЁэюёЄ№.} └тЄюЁ√ яЁшчэрЄхы№э√ яЁюЇхёёюЁрь
╚.┬.┬юыютшўє ш └.▐.╒Ёхээшъютє чр тэшьрэшх, яюыхчэ√х ёютхЄ√ ш
чрьхўрэш .

\end{document}